\numberwithin{equation}{section}
\newcommand\CP{{\mathbb C}\mathbb{P}} 
\newcommand\QQ{{\mathbb Q}} 
\newcommand\RR{{\mathbb R}} 
\newcommand\ZZ{{\mathbb Z}} 
\theoremstyle{plain}
\newtheorem{theorem}{Theorem}[section]
\newtheorem*{theorem*}{Theorem}
\newtheorem{lemma}[theorem]{Lemma}
\newtheorem{corollary}[theorem]{Corollary}
\newtheorem{proposition}[theorem]{Proposition}
 \theoremstyle{definition}
\newtheorem{remark}[theorem]{Remark}
\newtheorem{?}[theorem]{Problem}
\begin{document}

\title[On the minimal sum of Betti numbers of an almost complex manifold]{On the minimal sum of Betti numbers of an almost complex manifold}

\author{Michael Albanese and Aleksandar Milivojevi\'c}

\begin{abstract} We show that the only rational homology spheres which can admit almost complex structures occur in dimensions two and six. Moreover, we provide infinitely many examples of six-dimensional rational homology spheres which admit almost complex structures, and infinitely many which do not. We then show that if a closed almost complex manifold has sum of Betti numbers three, then its dimension must be a power of two. \end{abstract}

\address{Stony Brook University \\ Department of Mathematics} 

\email{michael.albanese@stonybrook.edu}
\email{aleksandar.milivojevic@stonybrook.edu}

\maketitle

\section{Introduction}

An outstanding problem in the topology of closed smooth manifolds is to determine whether the existence of an integrable complex structure imposes restrictions on the topology of the manifold beyond those imposed by the existence of an almost complex structure itself. In the symplectic setting, the existence of a \textit{closed} non-degenerate two-form $\omega$ on an even-dimensional manifold $M^{2n}$ tells us that the sum of the Betti numbers $\dim H^i(M; \RR)$ is at least $n+1$ (provided by the cohomology classes $[\omega^i]$). Among $2n$--manifolds admitting an almost complex structure $J$ we can consider the possible values of sums of Betti numbers, and one can ask whether requiring the presence of an integrable $J$ would increase these possible values. We show that for manifolds of dimension $2n \geq 8$ not equal to a power of two, the minimal possible sum of Betti numbers is four, both for almost complex and complex manifolds (and this bound is achieved by Hopf and Calabi--Eckmann manifolds, which have the homotopy type of a product of two odd-dimensional spheres).  This follows from our two main results featured in sections 2 and 3 respectively.

\begin{theorem*} Let $M$ be a rational homology sphere. If $M$ admits an almost complex structure, then $\dim M = 2$ or $6$. \end{theorem*}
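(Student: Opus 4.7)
Since $M$ admits an almost complex structure, $\dim M$ is even, say $\dim M = 2n$ with $n \geq 1$, and $H^{2i}(M;\QQ) = 0$ for $0 < i < n$. Consequently $c_i(TM) = 0$ rationally for $0 < i < n$, while $c_n(TM)[M] = \chi(M) = 2$. My plan is to derive a contradiction whenever $n \notin \{1, 3\}$ by applying the Atiyah--Singer index theorem to the $\Spin^c$ Dirac operator induced by the almost complex structure, possibly twisted by a well-chosen complex bundle, thereby obtaining characteristic numbers forced to be integers but manifestly failing to be so.

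I would first dispense with the case of even $n \geq 2$ via the Todd genus: integrality gives $\Td(M) = \int_M \td(TM) \in \ZZ$. In the polynomial $\td_n(c_1, \ldots, c_n)$ every monomial other than $c_n$ itself involves some $c_i$ with $i < n$ and thus vanishes on $M$, while the remaining coefficient of $c_n$ is $B_n/n!$, a standard feature of the Hirzebruch multiplicative sequence associated to $x/(1-e^{-x})$. Hence $\Td(M) = 2B_n/n!$; for even $n \geq 2$, $B_n \neq 0$ while $|B_n|/n! < 1/2$, so $\Td(M)$ is a nonzero rational strictly between $-1$ and $1$, contradicting integrality.

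For odd $n \geq 3$ the Bernoulli number $B_n$ vanishes and the previous step gives no information, so I would instead apply Atiyah--Singer to the $\Spin^c$ Dirac operator twisted by $E = T_{1,0}M$, yielding $\chi(M, T_{1,0}M) := \int_M \ch(T_{1,0}M)\,\td(TM) \in \ZZ$. Since $H^*(M;\QQ)$ is concentrated in degrees $0$ and $2n$, the only surviving contributions to the integrand are $n \cdot \td_n$ and $\ch_n(T_{1,0}M)$; Newton's identity for power sums gives the $c_n$-coefficient of $\ch_n$ as $(-1)^{n-1}/(n-1)!$, with all lower-order monomials vanishing on $M$. Combining,
\[
\chi(M, T_{1,0}M) = \frac{2\bigl(B_n + (-1)^{n-1}\bigr)}{(n-1)!} = \frac{2}{(n-1)!}
\]
for odd $n \geq 3$, which is integral only when $(n-1)! \leq 2$, i.e., $n \leq 3$. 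Thus only $n \in \{1, 3\}$ survive, giving $\dim M \in \{2, 6\}$. The main conceptual step is recognizing that for odd $n \geq 5$ the Todd genus alone carries no information, and one must twist by a judiciously chosen bundle -- here the holomorphic tangent bundle -- to detect a non-integer characteristic number; modulo the identification of the $c_n$-coefficients of $\td_n$ and $\ch_n$, the argument is purely an arithmetic comparison.
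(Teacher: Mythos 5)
Your argument is correct. The odd-$n$ half is essentially the paper's proof: both twist the canonical $\Spin^c$ Dirac operator by the tangent bundle and extract $2/(n-1)!$ from integrality of the index (the paper writes the index as $\int_M e^{c_1(L)/2}\ch(E)\hat{A}(TM)$ and kills the extra factors because $c_1(L)$ and the $p_i$ are torsion, whereas you write it as $\int_M \ch(E)\td(TM)$ and kill the $n\cdot\td_n$ term using $B_n=0$ for odd $n\geq 3$ --- the same computation in different clothing). The even-$n$ half is genuinely different: the paper applies the Hirzebruch signature theorem, using $\sigma(M)=0$, the relation $p_{n/2}=2(-1)^{n/2}c_n$ modulo torsion, and the nonvanishing of the leading coefficient $h_{n/2}$ of $L_{n/2}$ to reach the contradiction $0=4(-1)^{n/2}h_{n/2}$; you instead use integrality of the untwisted index, i.e.\ the Todd genus, which on a rational homology sphere collapses to $2B_n/n!$, a nonzero rational of absolute value less than one for even $n\geq 2$. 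The two even-case arguments are of comparable depth --- the nonvanishing $h_{n/2}\neq 0$ that the paper cites is itself equivalent, via $h_m=\frac{2^{2m}(2^{2m-1}-1)}{(2m)!}B_m$, to the nonvanishing of even-indexed Bernoulli numbers that you use --- but yours has the aesthetic advantage of running both parities through the single integrality statement for the (twisted) $\Spin^c$ index, at the small cost of needing the $c_n$-coefficient of $\td_n$, which is $(-1)^nB_n/n!$ in the signed convention (your unsigned $B_n/n!$ is correct for the even $n$ where you use it, and immaterial for odd $n$ where $B_n=0$).
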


\begin{theorem*}
Let $M$ be a closed almost complex manifold with sum of Betti numbers three. Then $\dim M$ is a power of two.
\end{theorem*}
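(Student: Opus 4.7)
The plan is to combine the severe cohomological restrictions imposed by the hypothesis $\sum b_i(M) = 3$ with the integrality of characteristic numbers on an almost complex manifold.

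\textbf{Setup.} Orientability gives $b_0 = b_{2n} = 1$ where $\dim M = 2n$, and Poincar\'e duality pairs $H^j$ with $H^{2n-j}$, so the third nontrivial Betti number must sit in the middle degree: $b_n = 1$. Graded commutativity and nondegeneracy of the cup product $H^n \otimes H^n \to H^{2n}$ force $n$ to be even, since otherwise $x \cup x = -x \cup x$ for the generator $x \in H^n$, hence $x^2 = 0$ rationally, contradicting $b_{2n} = 1$. Thus $H^*(M;\QQ) \cong \QQ[x]/(x^3)$ with $|x| = n$ even, $\chi(M) = 3$, and the Chern classes $c_i$ vanish rationally except for $i \in \{0, n/2, n\}$. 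The only nontrivial Chern numbers are $c_n[M] = \chi(M) = 3$ and $a := c_{n/2}^2[M] \in \ZZ$; for $n \geq 4$ we have $c_1 = 0$ and $M$ is spin.

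\textbf{Case $n \equiv 2 \pmod{4}$.} With $k := n/2$ odd, only the Pontryagin class $p_k \in H^{2n}$ can be nonzero (the equation $4i = n$ has no integer solution). Hirzebruch's signature formula collapses to
\[
\sigma(M) \;=\; s_k \cdot p_k[M], \qquad s_k \;=\; \frac{2^{2k}(2^{2k-1}-1)\,B_{2k}}{(2k)!},
\]
and $\sigma(M) = \pm 1$ since the middle intersection form has rank one. So $p_k[M] = \pm s_k^{-1}$ must be an integer. Applying Zsigmondy's theorem to $2^{2k-1}-1$ yields a primitive prime divisor $p$ whose order modulo which $2$ equals $2k-1$, forcing $p \equiv 1 \pmod{2k-1}$ and $p \geq 2k$; since $p$ is odd, $p > 2k$, so $p \nmid (2k)!$ and $(p-1) \nmid 2k$. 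By von Staudt--Clausen the latter condition also implies $p$ does not divide the denominator of $B_{2k}$, so $p$ appears in the denominator of $s_k^{-1}$, contradicting integrality for all $k \geq 2$. This rules out $n \equiv 2 \pmod{4}$ with $n \geq 6$; the only surviving case is $n = 2$, giving $\dim M = 4 = 2^2$ (realized by $\CP^2$).

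\textbf{Case $4 \mid n$.} Both $p_{n/4}$ and $p_{n/2}$ may now be nonzero, so signature alone is insufficient. I would then invoke the integrality of the Todd genus (valid on almost complex manifolds by Atiyah--Singer). Newton's identities give $s_n[M] = (n/2)(a-6)$ and $s_{n/2}^2[M] = (n/2)^2 a$, and the exponential form $\log \Td(TM) = -\sum_{k \geq 1} B_{2k}\, s_{2k}/(2k\,(2k)!)$ (using $s_1 = 0$) reduces Todd to a linear function of $a$:
\[
\Td(M) \;=\; -\frac{B_n(a-6)}{2\, n!} \;+\; \frac{B_{n/2}^2\, a}{2\,((n/2)!)^2} \;\in\; \ZZ.
\]
The hard part will be to show that this congruence admits no integer solution whenever $n$ has an odd prime factor. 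My strategy is a $p$-adic valuation analysis: by von Staudt--Clausen the primes $p$ with $(p-1) \mid n/2$ enter the denominators of both $B_n$ and $B_{n/2}$, so both summands of the coefficient $\mu_n$ of $a$ carry a pole of order two at $p$; a leading-order cancellation between them reduces $v_p(\mu_n)$ by one, while the constant term $3B_n/n!$ retains its full pole (since $v_p(n!)$ provides only one unit of compensation when $p \leq n < p^2$). Hence $v_p(\mu_n) > v_p(\nu_n)$, obstructing integer $a$. For the residual dimensions where this Todd congruence alone is insufficient, I would supplement with the integrality of the virtual Hodge numbers $\chi^p(M) = \int_M \ch(\Lambda^p T^*M)\,\Td(TM)$, each an integer by Atiyah--Singer applied to the twisted Dolbeault operator; these contribute further linear congruences on $a$ that together preclude solutions whenever $n$ is not a power of two. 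Combining the three cases yields that $\dim M$ must be a power of two.
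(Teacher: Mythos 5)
Your setup and your handling of dimensions $\equiv 4 \pmod 8$ are correct, and the latter is genuinely different from the paper: where the paper simply cites Su's Lemma 2.3 to rule out smooth rational projective planes outside dimensions $4$ and $8k$, you reprove this directly via the signature theorem and Zsigmondy's theorem applied to $2^{2k-1}-1$, which is a clean, self-contained argument. (One small slip: $c_1$ rationally zero only makes it torsion, so ``$M$ is spin'' is unjustified; you never use it, so no harm done.)

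The case $8 \mid \dim M$, which is the actual content of the theorem, is not proved. Writing the Todd-genus condition as $\mu_n a + \nu_n \in \ZZ$, an integer solution $a$ exists if and only if $v_p(\nu_n) \ge \min\bigl(0, v_p(\mu_n)\bigr)$ for every prime $p$, so you must exhibit a prime violating this. Your proposed mechanism --- both summands of $\mu_n$ carrying a pole of exact order two at $p$ with a leading-order cancellation --- forces $v_p(n!)=1$ and $v_p((n/2)!)=0$, hence $n/2 < p \le n$, which together with $(p-1)\mid n/2$ pins down $p = n/2+1$; the mechanism is therefore only available when $n/2+1$ is prime. It is already unavailable at real dimension $56$ ($n=28$, $n/2+1=15$), where a direct check of the relevant primes $2,3,5,7,29$ shows the two summands of $\mu_{28}$ have distinct valuations (no cancellation) and $v_p(\nu_{28}) \ge \min(0, v_p(\mu_{28}))$ throughout, so the Todd congruence alone gives no obstruction. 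Moreover, when $n/2+1$ \emph{is} prime the mechanism applies just as well to $n$ a power of two (e.g.\ $n=8$, $p=5$), so nothing in your outline actually engages the hypothesis that $n$ is not a power of two; the fallback via virtual Hodge numbers is stated but not computed. Compare the paper's argument: it uses Hirzebruch's congruence $\sigma \equiv \chi \pmod 4$ for almost complex $8k$-manifolds to fix $\sigma = -1$, the identity $h_{k,k} = \tfrac12 h_k^2 - \tfrac12 h_{2k}$ to reduce the signature formula to $\eta^2(2h_k^2 - h_{2k}) = 6h_{2k}+1$, and then a count of factors of $2$ in which Lemma 3.1 --- $v_2(m!) \le m-1$ with equality iff $m$ is a power of two --- is precisely the step where the power-of-two hypothesis enters. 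Your proposal is missing an ingredient of this kind, and without it the theorem is not established.
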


In real dimension 6 there is a possible discrepancy between the minimal sum of Betti numbers for almost complex and complex manifolds caused by rational homology six-spheres.


We summarize the situation in the graph below, where the horizontal axis denotes real dimension $2n$, and the vertical axis denotes the minimal sum of Betti numbers among almost complex manifolds in the given dimension. The empty circle at $(6,4)$ denotes the minimal sum of Betti numbers among known complex threefolds (achieved by $\CP^3$). By direct calculation on small powers of two, it is observed that the smallest dimension greater than $4$ where the minimal sum of Betti numbers could be three is 2048. 

We communicate a conjecture of Sullivan that the minimal sum of Betti numbers of a compact complex $n$-fold, $n\geq 3$, is four (which would give the above graph a particularly nice form). This would imply that $S^6$ (or any rational homology six-sphere) does not admit a complex structure; this is an open problem, see [1, Problem 3].

\vspace{0.3em}

The authors would like to thank Dennis Sullivan for bringing this problem to their attention, as well as Blaine Lawson and Claude LeBrun for many helpful conversations.  

\begin{figure}
\begin{tikzpicture}
  \matrix (m) [matrix of math nodes,
    nodes in empty cells,nodes={minimum width=2.5ex,
    minimum height=2.5ex,outer sep=-5pt},
    column sep=0.5ex,row sep=0.25ex]{
         6     &\cdot     &\cdot     &\cdot    &\cdot &\cdot &\cdot &\cdot &\cdot &\cdot &\cdot  &\cdot &\cdot &\cdot &\cdot &\cdot  &\cdot &\cdot &\cdot &\cdot \\
    		 5     &\cdot     &\cdot     &\cdot    &\cdot  &\cdot &\cdot &\cdot &\cdot &\cdot &\cdot  &\cdot &\cdot &\cdot &\cdot &\cdot  &\cdot &\cdot &\cdot &\cdot\\
          4     &\cdot      &\cdot     &\cdot     &\circ  &\bullet  &\bullet  &\bullet  &\bullet  &\bullet  &\bullet  &\bullet  &\bullet  &\bullet  &\bullet  &\bullet    &\bullet  &\bullet  &\bullet  &\bullet  \\
          3     &\cdot     &\cdot     &\bullet     &\cdot  &\cdot &\cdot &\cdot  &\cdot &\cdot &\cdot  &\cdot &\cdot &\cdot &\cdot &\cdot  &\cdot &\cdot &\cdot &\cdot  \\
          2     &\cdot      &\bullet     &\cdot     &\bullet  &\cdot &\cdot &\cdot  &\cdot &\cdot &\cdot  &\cdot &\cdot &\cdot &\cdot &\cdot  &\cdot &\cdot &\cdot &\cdot \\
          1     &\bullet      &\cdot     &\cdot     &\cdot  &\cdot &\cdot &\cdot  &\cdot &\cdot &\cdot  &\cdot &\cdot &\cdot &\cdot &\cdot  &\cdot &\cdot &\cdot &\cdot \\
          0     &\cdot      &\cdot     &\cdot     &\cdot  &\cdot &\cdot &\cdot  &\cdot &\cdot &\cdot  &\cdot &\cdot &\cdot &\cdot &\cdot  &\cdot &\cdot &\cdot &\cdot  \\
    \quad\strut &   0  &  2  &  4  & 6 & 8 & 10 & 12 & 14 & 16 & 18 & 20 & 22 & 24 & 26 & 28 & 30 & 32 & 34 & 36   \strut \\};
\end{tikzpicture}
\caption{The minimal sum of Betti numbers among closed smooth (almost) complex manifolds of a given real dimension.}
\end{figure}
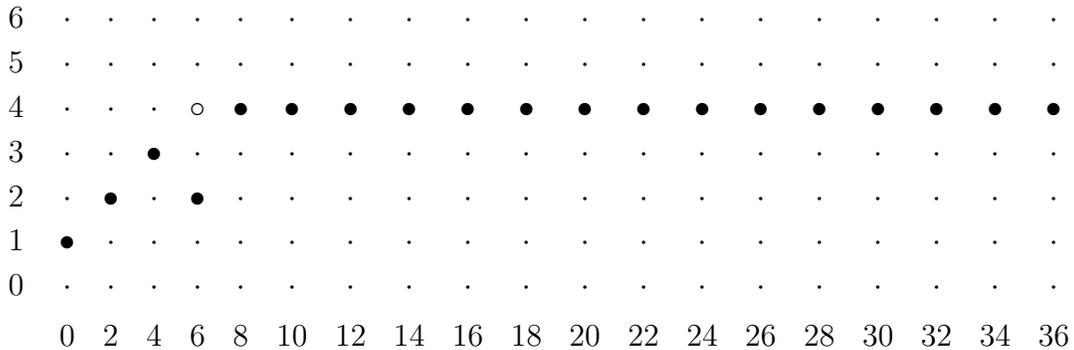

\section{Almost complex structures on rational homology spheres}

Let $R$ be a ring. An $n$-dimensional $R$ \emph{homology sphere} is a closed smooth $n$-dimensional manifold $M$ with $H^*(M; R) \cong H^*(S^n; R)$; when $R = \mathbb{Z}$ we say that $M$ is an \emph{integral homology sphere} and when $R = \mathbb{Q}$ we say that $M$ is a \emph{rational homology sphere}. Note that rational homology spheres (and hence integral homology spheres) are necessarily orientable as $H^n(M; \mathbb{Q}) \neq 0$. Rational homology spheres can alternatively be characterized among closed orientable manifolds as those which have the smallest possible sum of Betti numbers, namely two.

\vspace{0.3em}

Recall the following theorem of Borel and Serre \cite{BS}.

\begin{theorem}
The only spheres which admit almost complex structures are $S^2$ and $S^6$.
\end{theorem}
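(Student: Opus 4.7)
The plan is to use characteristic class integrality to obstruct the existence of an almost complex structure on $S^{2n}$ for $n \notin \{1,3\}$, and then to appeal to the well-known structures on $S^2$ and $S^6$ for the other direction. Odd-dimensional spheres are ruled out by parity, so I restrict to $S^{2n}$ with $n \geq 1$. Suppose then that $TS^{2n}$ inherits a complex structure, giving a rank-$n$ complex bundle $T$. Since $H^{2i}(S^{2n};\ZZ) = 0$ for $0 < i < n$, all intermediate Chern classes $c_i(T)$ vanish, while $c_n(T)$ evaluates to $\chi(S^{2n}) = 2$ on the fundamental class because the top Chern class of a complex bundle agrees with the Euler class of the underlying oriented real bundle.

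Next I would compute the top-degree piece of the Chern character via Newton's identity applied to the formal Chern roots. With $c_1 = \cdots = c_{n-1} = 0$, the $n$-th power sum of the roots reduces to $(-1)^{n-1} n\, c_n$, so $\ch_n(T)[S^{2n}] = (-1)^{n-1} \cdot 2/(n-1)!$. On the other hand, Bott periodicity gives $\widetilde{K}^0(S^{2n}) \cong \ZZ$, generated by a class whose top Chern character pairs to $\pm 1$ against $[S^{2n}]$; consequently $\ch_n(E)[S^{2n}] \in \ZZ$ for every complex vector bundle $E$ over $S^{2n}$. Applied to $E = T$ this forces $(n-1)! \mid 2$, leaving only $n \in \{1,2,3\}$.

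The case $n = 2$ evades the Chern character obstruction (since $2/1! = 2 \in \ZZ$) and I would dispose of it separately using Pontryagin classes. Because $TS^{2n}$ is stably trivial (its normal bundle in $\RR^{2n+1}$ is trivial), all its Pontryagin classes vanish. But an almost complex structure on $S^4$ would force $p_1(TS^4) = c_1(T)^2 - 2c_2(T) = -4 \neq 0$ via the standard expression for Pontryagin classes in terms of Chern classes of a complex bundle, a contradiction. The remaining cases $n = 1$ and $n = 3$ are realized by $S^2 \cong \CP^1$ and by the octonion-induced almost complex structure on $S^6$.

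The only real obstacle I anticipate is citing the Bott periodicity integrality of $\ch_n$ cleanly; beyond that the argument reduces to a Newton identity computation and a single Pontryagin class calculation. An alternative approach would replace the Chern character obstruction with the Todd genus (for which $\Td(S^4) = 1/6$ also disposes of $n=2$), but the top Todd polynomial vanishes identically on odd $n$ when only $c_n$ is nonzero, so one would still need a supplementary argument for $S^{2n}$ with $n$ odd and $\geq 5$; the Chern character route is more uniform.
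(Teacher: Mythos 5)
Your proposal is correct and follows essentially the same route as the paper's sketch: Chern character integrality on $S^{2n}$ via Bott periodicity forces $(n-1)!\mid 2$, and the residual case $n=2$ is eliminated by the Pontryagin class computation $p_1(TS^4)=-2c_2=-4$ against stable triviality, which is exactly the ``direct characteristic class argument'' the paper alludes to. The Newton-identity derivation of $\ch_n$ and the sign $(-1)^{n-1}=(-1)^{n+1}$ also match the paper's formula.
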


The modern proof of this fact can be found in many places; for example \cite{KP} is a nice self-contained exposition. There are three steps in the proof. First, one notices that the Chern character $\operatorname{ch} : K(S^{2n}) \to H^*(S^{2n}; \mathbb{Q})$ takes the form $\operatorname{ch}(E) = n + \frac{(-1)^{n+1}}{(n-1)!}c_n(E)$. Second, it follows from Bott Periodicity for $K$-theory that the image of the Chern character is contained in $H^*(S^{2n}; \mathbb{Z})$. Finally, equipping $S^{2n}$ with an almost complex structure is equivalent to realizing $TS^{2n}$ as a rank $n$ complex vector bundle, and so we see that $\operatorname{ch}_n(TS^{2n}) = \frac{(-1)^{n+1}}{(n-1)!}c_n(TS^{2n}) \in H^{2n}(S^{2n}; \mathbb{Z})$, but $c_n(TS^{2n}) = e(TS^{2n})$ which is twice the oriented generator, so $(n - 1)! \mid 2$. This still leaves the possibility that $n = 2$, but this can be ruled out by a direct characteristic class argument.

It should be noted that the original proof by Borel and Serre uses different techniques and actually proves something stronger: if $M$ is a closed $2n$-dimensional almost complex manifold with $c_i(M) = 0$ for $1 \leq i \leq n - 1$, and $p$ is a prime such that $p < n$ and $p \nmid n$, then $p \mid \chi(M)$. It follows that if a $2n$-dimensional integral homology sphere admits an almost complex structure, then $2n \leq 6$. Note, however, that this theorem cannot be used to tackle rational homology spheres. Also see \cite{L} for a similar result.

Here is an alternative to the integrality step in the modern proof that the first author learned from Blaine Lawson. Recall that on a spin manifold $M$ there is a Dirac operator $\slashed{\partial}$ and if $E \to M$ is a complex vector bundle, there is a twisted Dirac operator $\slashed{\partial}_E$ which has index $$\operatorname{ind}(\slashed{\partial}_E) = \int_M \operatorname{ch}(E)\hat{A}(TM),$$ see [10, Theorem III.13.10]. If $M = S^{2n}$, then $\hat{A}(TS^{2n}) = 1$ as $TS^{2n}$ is stably trivial. Taking $E = TS^{2n}$, we see that $\int_{S^{2n}} \operatorname{ch}(TS^{2n}) = \operatorname{ind}(\slashed{\partial}_{TS^{2n}}) \in \mathbb{Z}$.

Using a generalization of this argument, we obtain the following result.

\begin{theorem}
Let $M$ be a rational homology sphere. If $M$ admits an almost complex structure, then $\dim M = 2$ or $6$.
\end{theorem}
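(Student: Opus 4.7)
\emph{Proof proposal.}
My plan is to generalize the Dirac operator argument sketched above by replacing the Spin Dirac operator with the Spin$^c$ Dirac operator coming from the canonical Spin$^c$ structure induced by the almost complex structure (a rational homology sphere need not be spin, but any almost complex manifold is Spin$^c$). For any complex vector bundle $E$ on a closed almost complex $(M^{2n}, J)$, the Atiyah--Singer index theorem then yields integrality of
$$\int_M \operatorname{ch}(E) \operatorname{Td}(TM) \in \mathbb{Z}.$$

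I would then exploit the fact that $M$ is a rational homology sphere to simplify both sides. By Poincar\'e duality, $H^i(M;\mathbb{Z})$ is torsion for $0 < i < 2n$, so the Chern classes $c_1(TM), \ldots, c_{n-1}(TM)$ are torsion. Any cup product of these classes landing in the torsion-free group $H^{2n}(M;\mathbb{Z}) \cong \mathbb{Z}$ must vanish, so only the ``pure $c_n$'' terms in $\operatorname{Td}_n$ and $\operatorname{ch}_n$ contribute after evaluating on $[M]$. Combined with the Newton identity $\operatorname{ch}_n = \frac{(-1)^{n-1}}{(n-1)!} c_n + (\text{terms killed above})$ and $\langle c_n(TM),[M]\rangle = \chi(M) = 2$, the index formula applied to $E = \underline{\mathbb{C}}$ and $E = TM$ gives
$$2a \in \mathbb{Z} \qquad\text{and}\qquad 2na + \frac{2(-1)^{n-1}}{(n-1)!} \in \mathbb{Z},$$
where $a$ is the (unknown) coefficient of $c_n$ in the Todd polynomial $\operatorname{Td}_n$. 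Subtracting $n$ times the first from the second eliminates $a$ and forces $(n-1)! \mid 2$, hence $n \leq 3$.

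This leaves $\dim M \in \{2, 4, 6\}$, and I would rule out dimension $4$ using the standard identity $c_1^2[M] = 2\chi + 3\sigma = 4$ for closed almost complex $4$-manifolds with $\chi = 2$ and $\sigma = 0$: since $c_1(TM) \in H^2(M;\mathbb{Z})$ is torsion, so is $c_1^2 \in H^4(M;\mathbb{Z}) \cong \mathbb{Z}$, which is torsion-free, so $c_1^2[M] = 0$, a contradiction.

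The main thing to get right is the input, namely verifying that the canonical Spin$^c$ structure on $(M,J)$ really produces $\operatorname{Td}(TM)$ in the index integrand (equivalently, the identity $\hat{A}(TM_\mathbb{R})\, e^{c_1(TM,J)/2} = \operatorname{Td}(TM,J)$ and the identification of the associated determinant line). Once that is in hand, the argument is formal: integrality plus the absence of rational middle cohomology combine to give the divisibility statement, and the low-dimensional exceptions are cleaned up by direct characteristic class computations.
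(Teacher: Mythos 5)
Your proposal is correct, and its engine is the same as the paper's for odd $n$: integrality of the spin${}^c$ index, the vanishing of everything but the pure $c_n$ contributions because the intermediate rational cohomology is zero, and $\int_M c_n(TM)=\chi(M)=2$, yielding $(n-1)!\mid 2$. Where you genuinely diverge is in how the top-degree term of the genus is controlled and in the treatment of even $n$. The paper works with $\exp(c_1(L)/2)\operatorname{ch}(E)\hat{A}(TM)$ and splits into cases: for $n$ odd, $\hat{A}(TM)$ is rationally trivial for degree reasons ($2n\not\equiv 0 \bmod 4$), so no unknown coefficient ever appears; for $n$ even, the class $p_{n/2}$ could contribute, and the paper instead runs the Hirzebruch signature theorem directly, getting $0=\sigma(M)=4(-1)^k h_k$ and contradicting $h_k\neq 0$ --- which kills all even $n$ at once, including $n=2$. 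Your Todd-class formulation forces you to confront the unknown coefficient $a$ of $c_n$ in $\operatorname{Td}_n$ in every dimension, and your subtraction trick (evaluating the index on $\underline{\mathbb{C}}$ and on $TM$ and eliminating $a$) handles this cleanly and uniformly in $n$; the price is that $n=2$ survives and must be excluded separately, which your $c_1^2[M]=2\chi+3\sigma=4$ versus $c_1$ torsion argument does correctly. Both routes are sound; the paper's buys an immediate contradiction for all even $n$ from a single application of the signature theorem, while yours buys a single unified index computation valid for all $n$ at the cost of one extra low-dimensional check. The one input you flag as needing verification --- that the canonical spin${}^c$ structure of $(M,J)$ gives $\operatorname{ind}(\slashed{\partial}^c_E)=\int_M\operatorname{ch}(E)\operatorname{Td}(M)$, equivalently $e^{c_1/2}\hat{A}=\operatorname{Td}$ --- is standard and is exactly the identity the paper records after its proof.
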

\begin{proof}
Suppose $\dim M = 2n$. If $n$ is even, say $n = 2k$, then modulo torsion $p_k(TM) = 2(-1)^kc_{2k}(TM) = 2(-1)^kc_n(TM)$, and all other Pontryagin classes are torsion. By the Hirzebruch signature theorem, \begin{align*} 0 &= \sigma(M) = \int_M L(p_1, \dots, p_k) = \int_M h_kp_k = 2(-1)^kh_k\int_M c_n(TM)\\ & = 2(-1)^kh_k\int_M e(TM) = 2(-1)^kh_k\chi(M) = 4(-1)^kh_k. \end{align*} This is a contradiction as $h_k \neq 0$ (see [3, Corollary 3]).

Now suppose $n$ is odd. The case $n = 1$ is clear, so suppose $n > 1$. An almost complex manifold has a canonical spin${}^c$ structure, so for any complex vector bundle $E \to M$ there is a twisted spin${}^c$ Dirac operator $\slashed{\partial}^c_E$ which has index $$\operatorname{ind}(\slashed{\partial}^c_E) = \int_M \exp(c_1(L)/2)\operatorname{ch}(E)\hat{A}(TM),$$ where $L$ is the complex line bundle associated to the spin${}^c$ structure [5, Theorem 26.1.1]. Note that $c_1(L)$ and $p_i(TM)$ are all torsion classes, so $$\operatorname{ind}(\slashed{\partial}^c_E) = \int_M \operatorname{ch}(E) = \int_M \operatorname{ch}_n(E) = \frac{(-1)^{n+1}}{(n-1)!}\int_M c_n(E).$$ Taking $E = TM$, we see that $\int_M c_n(TM) = \int_M e(TM) = \chi(M) = 2$, and so $(n - 1)! \mid 2$. As $n$ is odd and greater than one, the only possibility is $n = 3$.
\end{proof}

Note that for the canonical spin${}^c$ structure associated to an almost complex structure, the index of $\slashed{\partial}_E^c$ can also be expressed as

$$\operatorname{ind}(\slashed{\partial}_E^c) = \int_M\operatorname{ch}(E)\operatorname{Td}(M).$$

If $J$ is integrable and $E$ is holomorphic, then $\slashed{\partial}_E^c = \bar{\partial}_E + \bar{\partial}_E^*$ and the above becomes the statement of the Hirzebruch-Riemann-Roch Theorem.

There is also a proof for the $n$ even case along the lines of the $n$ odd case. First note that as $\sigma(M) = 0$ and $h_k \neq 0$, we must have $p_k(TM) = 0$ and hence $$\operatorname{ind}(\slashed{\partial}^c_{E}) = \int_M \exp(c_1(L)/2)\operatorname{ch}(E)\hat{A}(TM) = \frac{(-1)^{n+1}}{(n-1)!}\int_M c_n(E)$$ as in the $n$ odd case. The benefit of this alternative proof is that it allows us to deduce the following.


\begin{proposition}
Let $M$ be a $2n$-dimensional rational homology sphere. For any complex vector bundle $E \to M$, we have $(n - 1)! \mid c_n(E)$. Moreover, every class in $H^{2n}(M;\mathbb{Z})$ which is a multiple of $(n-1)!$ is $c_n(E)$ for some $E$.
\end{proposition}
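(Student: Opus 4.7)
The strategy is to adapt the alternative even-$n$ argument from the proof of Theorem 2.2, replacing $TM$ by an arbitrary complex bundle $E$: the same Newton-identity computation gives the Chern character formula, and an independent argument supplies integrality. For the realizability claim I would pull back bundles from $S^{2n}$ along maps of appropriate degree.

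First, the identity $\operatorname{ch}_n(E) = \frac{(-1)^{n+1}}{(n-1)!} c_n(E)$ in $H^{2n}(M;\mathbb{Q})$ holds for any complex $E$, by exactly the reasoning used for $E = TM$ in Theorem 2.2: since $H^{2i}(M;\mathbb{Q}) = 0$ for $0 < i < n$, every $c_i(E)$ with $0 < i < n$ is a torsion class, and in the torsion-free $H^{2n}(M;\mathbb{Z}) \cong \mathbb{Z}$ any product of such classes vanishes, so Newton's identity collapses to the displayed term.

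Next I would show $\operatorname{ch}_n(E) \in H^{2n}(M;\mathbb{Z})$ via the Atiyah--Hirzebruch spectral sequence $H^p(M;K^q(\mathrm{pt})) \Rightarrow K^{p+q}(M)$. All outgoing differentials from $E_r^{2n,-2n}$ vanish for dimensional reasons, and all incoming ones vanish because their sources are subquotients of the torsion groups $H^{2n-r}(M;\mathbb{Z})$ while the target is a subquotient of the torsion-free $\mathbb{Z}$. Thus $E_\infty^{2n,-2n} = \mathbb{Z}$, and the edge map $F^{2n} K^0(M) \to H^{2n}(M;\mathbb{Z})$, which agrees (up to sign) with $\operatorname{ch}_n$ on the bottom filtration, is an isomorphism. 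Classes not in $F^{2n}$ contribute only torsion to the successive quotients and are therefore killed by $\operatorname{ch}_n$, so in total $\operatorname{ch}_n(E) \in H^{2n}(M;\mathbb{Z})$ for any $E$, giving $(n-1)! \mid c_n(E)$. (Alternatively, one could try a twisted spin${}^c$ Dirac operator argument exactly in the spirit of Theorem 2.2, but that would require producing a spin${}^c$ structure on $M$, which is not automatic for a general rational homology sphere.)

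For the realizability, compose the degree-one collapse map $M \to S^{2n}$ with a degree-$k$ self-map of $S^{2n}$ to obtain $g_k \colon M \to S^{2n}$ of degree $k$. A generator of $\tilde{K}(S^{2n}) = \mathbb{Z}$ is representable by an honest rank-$n$ complex bundle $F$ on $S^{2n}$ with $c_n(F) = \pm(n-1)!$, reflecting the classical fact that $\pi_{2n-1}(U(n)) \to \pi_{2n-1}(U) = \mathbb{Z}$ has image $(n-1)!\,\mathbb{Z}$. Then $c_n(g_k^* F) = \pm k\,(n-1)!$, and these exhaust the multiples of $(n-1)!$ in $H^{2n}(M;\mathbb{Z})$ as $k$ varies. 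The main difficulty is the integrality step --- in particular, the identification of $\operatorname{ch}_n$ with the AHSS edge map on $F^{2n}K^0(M)$, which is where the rational-homology-sphere hypothesis is crucially used; the Chern character identity and the pullback construction are routine once this is in hand.
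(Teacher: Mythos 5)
Your Chern character identity is fine, and your realizability argument is exactly the paper's: pull back a rank $n$ bundle on $S^{2n}$ with top Chern class $(n-1)!$ times the generator (the paper takes the positive spinor bundle) along maps $M \to S^{2n}$ of arbitrary degree. (A small quibble there: the stabilization $\pi_{2n-1}(U(n)) \to \pi_{2n-1}(U)$ is an isomorphism, not a map with image $(n-1)!\,\ZZ$; the factor $(n-1)!$ is the value of $c_n$ on the generator of $\tilde{K}(S^{2n})$, equivalently the index of the image of $\pi_{2n-1}(U(n-1))$. This does not affect your conclusion.)

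The integrality step, however, has a genuine gap. Your AHSS computation correctly gives $E_\infty^{2n,-2n} = H^{2n}(M;\ZZ) = \ZZ$, hence that $\operatorname{ch}_n$ is integral on $F^{2n}K^0(M)$, and also that $\tilde{K}(M)/F^{2n}K^0(M)$ is finite. But the sentence ``classes not in $F^{2n}$ contribute only torsion to the successive quotients and are therefore killed by $\operatorname{ch}_n$'' is not a valid inference: for $x \in F^{2p}$ the compatibility of the Chern character with the filtration controls only $\operatorname{ch}_j(x)$ for $j \leq p$ and says nothing about $\operatorname{ch}_n(x)$ with $n > p$. The pattern of reasoning fails in general: on $\CP^2$ the class $[H]-1$ lies in $F^2 \setminus F^4$ and $\operatorname{ch}_2([H]-1) = x^2/2$ is not integral. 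For a rational homology sphere the middle quotients are torsion, so all your argument actually yields is that some multiple $d\,([E]-\operatorname{rank}E)$ lands in $F^{2n}K^0(M)$, whence $\operatorname{ch}_n(E) \in \tfrac{1}{d}\ZZ$ with no control over $d$. The paper instead obtains integrality from the index theorem, $\operatorname{ind}(\slashed{\partial}^c_E) = \int_M \exp(c_1(L)/2)\operatorname{ch}(E)\hat{A}(TM) = \frac{(-1)^{n+1}}{(n-1)!}\int_M c_n(E)$, using that $c_1(L)$ and the Pontryagin classes are torsion (and, for $n$ even, that $p_k(TM)=0$ because the signature vanishes). You were right to flag that this requires a spin${}^c$ structure on $M$ --- that hypothesis is genuinely used, and in the paper's surrounding discussion it is supplied by an almost complex structure --- but your spectral sequence substitute does not succeed in removing it, so the key divisibility $(n-1)! \mid c_n(E)$ remains unproved in your write-up.
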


For the last statement, take a degree $d$ map $f : M \to S^{2n}$. Let $E_0 \to S^{2n}$ be a complex vector bundle with $c_n(E_0)$ equal to $(n-1)!$ times the generator of $H^{2n}(S^{2n};\ZZ)$ (for example, the positive spinor bundle $\mathbb{S}^+ \to S^{2n}$). Then $E = f^*E_0$ has $c_n(E)$ equal to $(n-1)!\cdot d$ times the generator of $H^{2n}(M;\ZZ)$. Note that in the case of $S^{2n}$, rank $n$ complex vector bundles are determined up to isomorphism by their top Chern class; it is not clear whether the same is true for rational homology spheres.

\vspace{0.3em}

There is only one rational homology two-sphere up to diffeomorphism, namely $S^2$. However there are many rational homology six-spheres. The natural question which arises is: which of these admit almost complex structures? The primary obstruction to a closed orientable manifold $M$ admitting an almost complex structure is the third integral Stiefel-Whitney class $W_3(M) \in H^3(M;\mathbb{Z})$; if $\dim M = 6$, this is the only obstruction [12, Remark 1]. Recall that $W_3(M) = \beta(w_2(M))$ where $\beta$ is the Bockstein associated to the short exact sequence $0 \to \mathbb{Z} \xrightarrow{\times 2} \mathbb{Z} \to \mathbb{Z}_2 \to 0$. In particular, $W_3(M)$ is two-torsion, i.e. $W_3(M) \in H^3(M;\mathbb{Z})_2 := \{\alpha \in H^3(M; \mathbb{Z}) \mid 2\alpha = 0\}$. So a sufficient condition for a closed orientable six-manifold to admit an almost complex structure is $H^3(M; \mathbb{Z})_2 = 0$. (Note, however, that this is not necessary as $\mathbb{RP}^3\times\mathbb{RP}^3$ demonstrates.) In particular, every integer homology six-sphere admits an almost complex structure. This already provides us with many examples of almost complex rational homology spheres, see \cite{K}.

Note that $W_3(M)$ is the obstruction to finding an integral lift for $w_2(M)$ and hence the obstruction to $M$ being spin${}^c$. Orientable manifolds of dimension at most four are always spin${}^c$, but this is no longer true starting in dimension five. For example, the Wu manifold $SU(3)/SO(3)$ has dimension five and is not spin${}^c$. Moreover, it is a (simply connected) rational homology sphere. It would then seem reasonable to expect that there are six-dimensional rational homology spheres which are not spin${}^c$ and hence do not admit almost complex structures.

Let $M$ be a closed $n$-dimensional manifold and consider the $(n + 1)$-dimensional manifold $N$ obtained as the result of surgery on the $S^1$ factor of $S^1\times M$. (This process is known as \emph{spinning $M$}.) If $M$ is a rational homology sphere, then so is $N$. 

\begin{proposition}
Let $M$ be a closed smooth orientable $n$-dimensional manifold and let $N$ denote the $(n + 1)$-dimensional closed manifold obtained by spinning $M$. Then $N$ is spin${}^c$ if and only if $M$ is.
\end{proposition}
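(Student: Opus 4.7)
The plan is to recognize $N$ as the pushout
\[N \;=\; (S^1 \times M_0) \cup_{S^1 \times S^{n-1}} (D^2 \times S^{n-1}),\]
where $M_0 = M \setminus \mathrm{int}(D^n)$, and to detect the obstruction $W_3$ to being spin${}^c$ via Mayer--Vietoris on this decomposition. First I would dispose of the low-dimensional cases: for $n \leq 3$ both $M$ and $N$ have dimension at most four and are therefore automatically spin${}^c$, so the statement is trivial. Henceforth assume $n \geq 4$.

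Set $U = S^1 \times M_0$ and $V = D^2 \times S^{n-1}$, so $U \cap V = S^1 \times S^{n-1}$. Restricting $TN$ to $U$ and $V$ gives $T(S^1) \oplus TM_0$ and $T(D^2) \oplus T(S^{n-1})$ respectively; since $T(S^1)$ and $T(D^2)$ are trivial and $T(S^{n-1})$ is stably trivial, $w_2(N)|_U = p^*i^*w_2(M)$, where $i\colon M_0 \hookrightarrow M$ is the inclusion and $p\colon U \to M_0$ the projection, and $w_2(N)|_V = 0$. Two basic facts do most of the work in the range $n \geq 4$: the long exact sequence of the pair $(M, M_0)$ together with excision gives $i^*\colon H^j(M; R) \xrightarrow{\sim} H^j(M_0; R)$ for $j \leq 2$ and any coefficient ring $R$; and Künneth gives $H^2(U \cap V; R) = H^2(V; R) = 0$ for $R \in \{\mathbb{Z}, \mathbb{Z}/2\}$.

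With these inputs I would run Mayer--Vietoris in both $\mathbb{Z}$ and $\mathbb{Z}/2$ coefficients. The integer sequence collapses to give surjectivity $H^2(N;\mathbb{Z}) \to H^2(U;\mathbb{Z})$. The mod-$2$ sequence yields injectivity $H^2(N;\mathbb{Z}/2) \hookrightarrow H^2(U;\mathbb{Z}/2)$, since the $S^1$-generator of $H^1(U\cap V;\mathbb{Z}/2)$ extends to $U$ and hence the connecting map vanishes. For the forward direction, given an integer lift $\tilde w_2(M)$, I would pull it back to $U$, lift it (by surjectivity) to some $\beta \in H^2(N;\mathbb{Z})$, and use mod-$2$ injectivity to conclude $\beta \bmod 2 = w_2(N)$. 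For the converse, a lift of $w_2(N)$ restricts to a class in $H^2(U;\mathbb{Z}) \cong H^2(M_0;\mathbb{Z}) \oplus H^1(M_0;\mathbb{Z})$, whose $H^2(M_0;\mathbb{Z})$-component is a lift of $w_2(M_0)$ and which transports back to an integer lift of $w_2(M)$ via $(i^*)^{-1}$.

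The step I expect to be most delicate is the mod-$2$ bookkeeping: ensuring that the class $\beta$ produced in the forward direction actually reduces to $w_2(N)$, rather than differing from it by a class coming from $H^1(U\cap V;\mathbb{Z}/2)$ under the Mayer--Vietoris connecting homomorphism. The injectivity $H^2(N;\mathbb{Z}/2) \hookrightarrow H^2(U;\mathbb{Z}/2)$, which rests on the lifting of the $S^1$-generator to $U$, is precisely the ingredient that rules this out.
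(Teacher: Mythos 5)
Your proof is correct, and it rests on the same surgery decomposition and the same Mayer--Vietoris strategy as the paper's; the genuine difference is in how the spin${}^c$ obstruction is tracked. The paper works in degree $3$ with the single integral class $W_3 = \beta(w_2)$: it checks that the restrictions $H^3(S^1\times M;\mathbb{Z})\to H^3(U;\mathbb{Z})$ and $H^3(N;\mathbb{Z})\to H^3(U;\mathbb{Z})$ are isomorphisms for $n\geq 5$, so that naturality identifies $W_3(N)$ with $p^*W_3(M)$ through $H^3(U;\mathbb{Z})$ in one line; the cost is that at $n=4$ these maps are only injective and a separate (easy) argument is needed, using that $W_3$ of an orientable $4$-manifold vanishes. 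You instead work in degree $2$ with an integral lift of $w_2$, which forces you to juggle two coefficient systems and to establish two facts --- surjectivity of $H^2(N;\mathbb{Z})\to H^2(U;\mathbb{Z})$ and injectivity of $H^2(N;\mathbb{Z}/2)\to H^2(U;\mathbb{Z}/2)$ --- plus the bookkeeping you rightly flag as the delicate point (that your class $\beta$ really reduces to $w_2(N)$, which the mod-$2$ injectivity settles). In exchange, your argument treats all $n\geq 4$ uniformly, and it compares $M_0$ directly to $M$ via the pair $(M,M_0)$ rather than routing through $S^1\times M$ as the paper does. All the cohomological inputs you invoke (vanishing of $H^j(M,M_0)$ for $j\leq 3$ when $n\geq 4$, the K\"unneth computations for $S^1\times S^{n-1}$ and $D^2\times S^{n-1}$, and the lifting of the $S^1$-generator of $H^1(U\cap V;\mathbb{Z}/2)$ to $U$) check out.
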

\begin{proof}
Let $U = (S^1\times M)\setminus (S^1\times D^n)$, $V = S^1\times D^n$ and $W = D^2\times S^{n-1}$, and let $i : U \to S^1\times M$, $j : U \to N$, and $k : W \to N$ be the inclusions. 

Suppose $n \geq 5$. By applying Mayer-Vietoris to $(U, V)$ and $(U, W)$ we see that $i^* : H^3(S^1\times M; \mathbb{Z}) \to H^3(U; \mathbb{Z})$ and $j^* : H^3(N; \mathbb{Z}) \to H^3(U; \mathbb{Z})$ are isomorphisms. By naturality, we have $i^*W_3(S^1\times M) = W_3(U) = j^*W_3(N)$. If $p : S^1\times M \to M$ is the projection, then $p^*$ is injective and $W_3(S^1\times M) = p^*W_3(M)$. The claim now follows.

If $n = 4$, the same computation shows that $i^* : H^3(S^1\times M; \mathbb{Z}) \to H^3(U; \mathbb{Z})$ and $(j^*, k^*) : H^3(N; \mathbb{Z}) \to H^3(U; \mathbb{Z})\oplus H^3(W; \mathbb{Z})$ are injective. Note that $W_3(S^1\times M) = p^*W_3(M) = p^*0 = 0$ as $n = 4$, so $W_3(U) = i^*W_3(S^1\times M) = 0$. Now $(j^*, k^*)W_3(N) = (j^*W_3(N), k^*W_3(N)) = (W_3(U), W_3(W)) = (0, 0)$ so $W_3(N) = 0$ by injectivity.

If $n \leq 3$, the claim automatically holds.
\end{proof}

Rational homology spheres in odd dimensions are plentiful, for example lens spaces. Spinning any lens space gives an even-dimensional rational homology sphere; moreover, as lens spaces are all spin${}^c$, the resulting manifold is also spin${}^c$. In particular, spinning five-dimensional lens spaces gives infinitely many examples of six-dimensional rational homology spheres which admit almost complex structures; note, none of these are integral homology spheres. On the other hand, spinning the Wu manifold gives a six-dimensional rational homology sphere which is not spin${}^c$.

\begin{corollary}
Not all six-dimensional rational homology spheres admit almost complex structures.
\end{corollary}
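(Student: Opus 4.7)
The plan is to exhibit a concrete example of a six-dimensional rational homology sphere that fails to be spin${}^c$, hence fails to admit an almost complex structure. The candidate is handed to us by the preceding discussion: the Wu manifold $SU(3)/SO(3)$ is a five-dimensional simply connected rational homology sphere that is not spin${}^c$, and spinning a rational homology sphere yields another rational homology sphere in one dimension higher.

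First I would verify (or just cite from the preceding paragraphs) that spinning preserves the rational homology sphere property, so that the six-manifold $N$ obtained by spinning the Wu manifold $M = SU(3)/SO(3)$ is indeed a six-dimensional rational homology sphere. Next, I would invoke the proposition proved immediately above: since spinning preserves the spin${}^c$ property in both directions, and since $M$ is not spin${}^c$, the spun manifold $N$ is not spin${}^c$ either. Finally, since any almost complex manifold carries a canonical spin${}^c$ structure (this is already used in the proof of the main theorem of the section), $N$ cannot admit an almost complex structure.

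The only nontrivial input is the fact that the Wu manifold is not spin${}^c$, which is classical and can be taken as known; everything else is an immediate assembly of the results in the section. There is no real obstacle — the corollary is essentially a direct consequence of Proposition combined with the existence of a single odd-dimensional rational homology sphere that is not spin${}^c$. If desired, one could strengthen the statement to produce infinitely many examples by spinning connected sums or products of the Wu manifold with integral homology spheres, but a single example suffices for the corollary as stated.
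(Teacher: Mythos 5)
Your proposal is correct and matches the paper's argument exactly: spin the Wu manifold $SU(3)/SO(3)$, use the spinning proposition to conclude the resulting six-dimensional rational homology sphere is not spin${}^c$, and note that spin${}^c$ (equivalently the vanishing of $W_3$) is necessary for an almost complex structure. No differences worth noting.
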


By taking connected sums, we see that there are also infinitely many six-dimensional rational homology spheres which do not admit almost complex structures. Spinning the Wu manifold repeatedly, we obtain the following result which may be of independent interest.

\begin{corollary}
For every $n \geq 5$ there are simply connected $n$-dimensional rational homology spheres which are not spin${}^c$.
\end{corollary}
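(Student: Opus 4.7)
The plan is to iteratively spin the Wu manifold. As noted in the preceding discussion, $SU(3)/SO(3)$ is a simply connected five-dimensional rational homology sphere that is not spin${}^c$. Since spinning preserves the rational homology sphere property (by the remark immediately preceding Proposition 2.4) and preserves the failure of being spin${}^c$ (by Proposition 2.4 itself), it suffices to verify that spinning a simply connected manifold of dimension $n \geq 3$ yields a simply connected manifold; the result then follows by induction on $n$, starting from the Wu manifold.

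To carry out the simple connectedness step, I let $M$ be a simply connected $n$-manifold with $n \geq 3$ and let $N$ be the result of spinning $M$, with decomposition $N = U \cup W$ exactly as in the proof of Proposition 2.4: $U = (S^1\times M)\setminus(S^1\times D^n)$, $W = D^2\times S^{n-1}$, and $U\cap W \simeq S^1\times S^{n-1}$. The argument is a straightforward application of van Kampen's theorem. Since the removed submanifold $S^1\times\{0\}$ has codimension $n\geq 3$ inside $S^1\times M$, the inclusion induces an isomorphism $\pi_1(U)\cong \pi_1(S^1\times M)\cong \mathbb{Z}$, generated by the $S^1$ factor. Since $n-1\geq 2$ we have $\pi_1(W) = 1$, and the generator of $\pi_1(U\cap W)\cong \mathbb{Z}$ is again the $S^1$ factor, which maps to the generator of $\pi_1(U)$ on one side while bounding a disk in $W$ on the other. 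Van Kampen then gives $\pi_1(N) = 1$.

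Iterating the spinning construction on $SU(3)/SO(3)$ yields simply connected $(5+k)$-dimensional rational homology spheres which are not spin${}^c$ for every $k\geq 0$, covering all dimensions $n\geq 5$ and establishing the corollary.

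The proof is essentially bookkeeping on top of the machinery already developed in Proposition 2.4, and the only step that requires genuine attention is the fundamental group computation. The hypothesis $n\geq 3$ is used in two essential places: to guarantee that removing $S^1\times D^n$ from $S^1\times M$ leaves $\pi_1$ unchanged, and to ensure that $W = D^2\times S^{n-1}$ is simply connected so that the generator of $\pi_1(U)$ gets killed after gluing. I do not anticipate any further obstacle.
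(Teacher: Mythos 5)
Your proof is correct and follows exactly the paper's route: the paper's entire justification is the sentence ``Spinning the Wu manifold repeatedly, we obtain the following result,'' relying on Proposition 2.4 and the fact that spinning preserves the rational homology sphere condition. Your van Kampen verification that spinning preserves simple connectedness (for $n \geq 3$) is a correct filling-in of a detail the paper leaves implicit.
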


\section{Sum of betti numbers 3 in dimensions not a power of 2}

We now show that a closed almost complex manifold $M^n$, with $n$ not equal to a power of two, cannot satisfy the property that the sum of its Betti numbers is equal to three. First, observe that by Poincar\'e duality $n$ must be even and the rational cohomology must be concentrated in degrees $0, \tfrac{n}{2}, n$. Namely, $H^0(M; \QQ) \cong H^{n/2}(M;\QQ) \cong H^{n}(M;\QQ) \cong \QQ$ and all the other rational cohomology groups are trivial. Now, if $n$ were of the form $4k+2$, then the intersection pairing on $H^{2k+1}$ would be skew-symmetric and so necessarily of even rank, which is a contradiction. Therefore we can restrict to manifolds of dimension $n = 4k$. We will use the following lemma. 

\begin{lemma}
Let $n$ be a positive integer and suppose $n = 2^ka$ where $a$ is odd. Denote the number of factors of 2 in $n!$ by $l$. Then $l \leq n - 1$ with equality if and only if $a = 1$. 
\end{lemma}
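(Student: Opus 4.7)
\medskip

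\noindent\textbf{Proof plan.} The plan is to identify $l$ with a well-known arithmetic quantity, then read off both assertions as trivial consequences. Specifically, I will use Legendre's formula, which gives
\[
l \;=\; v_2(n!) \;=\; \sum_{i \geq 1} \left\lfloor \frac{n}{2^i} \right\rfloor \;=\; n - s_2(n),
\]
where $s_2(n)$ denotes the number of $1$'s in the binary expansion of $n$. The identity $v_2(n!) = n - s_2(n)$ is standard and can be proved in one line by induction or by writing $\lfloor n/2^i \rfloor$ in terms of the binary digits of $n$ and telescoping.

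Granting this, the lemma follows immediately. Since $n \geq 1$, we have $s_2(n) \geq 1$, so $l = n - s_2(n) \leq n - 1$. Moreover, equality $l = n - 1$ holds if and only if $s_2(n) = 1$, that is, if and only if $n$ is a power of $2$; in terms of the decomposition $n = 2^k a$ with $a$ odd, this is exactly the condition $a = 1$.

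If one prefers to avoid invoking the digit-sum formula directly, an alternative is to write the binary expansion $n = 2^{b_1} + 2^{b_2} + \cdots + 2^{b_m}$ with $b_1 > b_2 > \cdots > b_m \geq 0$, and compute
\[
v_2(n!) \;=\; \sum_{j=1}^{m} v_2\bigl((2^{b_j})!\bigr) \;=\; \sum_{j=1}^{m} (2^{b_j} - 1) \;=\; n - m,
\]
where the first equality uses $v_2\bigl(\binom{n}{2^{b_1}}\bigr) = 0$ (a classical consequence of Kummer's theorem) together with an iteration, and the middle equality is the easy computation $v_2((2^b)!) = 2^b - 1$. Then $m = s_2(n) \geq 1$, with $m = 1$ exactly when $n$ is a power of $2$.

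There is no genuine obstacle here; the only thing to choose is the cleanest route. I would opt for Legendre's formula plus the digit-sum identity, stating both and giving a one-line justification, since this is the most transparent and keeps the proof to a few lines.
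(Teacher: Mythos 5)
Your proof is correct. Both you and the paper start from Legendre's formula $l = \sum_{i\ge 1}\lfloor n/2^i\rfloor$; the difference is only in how the bound is extracted. You invoke the standard closed form $v_2(n!) = n - s_2(n)$, where $s_2(n)$ is the binary digit sum, and then read off both claims from $s_2(n)\ge 1$. The paper instead estimates the sum directly using the decomposition $n = 2^k a$: it splits off the first $k$ terms to get $(2^k-1)a$, rewrites the tail as $\sum_i \lfloor (a-1)/2^i\rfloor$, and bounds this by $a-1$, with the equality analysis coming from when the floors are exact. Your route gives the exact value of $l$ rather than just the inequality, which is slightly stronger and cleaner, at the cost of citing (or proving) the digit-sum identity; the paper's version is self-contained and tailored to exactly the statement needed. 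Either is perfectly acceptable here.
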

\begin{proof}
Note that $l = \sum_{i=1}^{\infty}\lfloor\frac{n}{2^i}\rfloor = \sum_{i=1}^{k} 2^{k-i}a + \sum_{i=1}^{\infty}\lfloor\frac{a}{2^i}\rfloor = (2^k - 1)a + \sum_{i=1}^{\infty}\lfloor\frac{a-1}{2^i}\rfloor \leq (2^k - 1)a + \sum_{i=1}^{\infty}\frac{a-1}{2^i} = (2^k - 1)a + (a - 1) = n - 1$ with equality if and only if $\lfloor\frac{a-1}{2^i}\rfloor = \frac{a-1}{2^i}$ for every $i$; as the left hand side is zero for $i$ large enough, the same must be true of the right hand side and hence $a = 1$.
\end{proof}


First we consider dimensions of the form $8k$.

\begin{proposition} There is no closed almost complex manifold of dimension $8k$, with $k$ not a power of two, whose sum of Betti numbers equals three.\end{proposition}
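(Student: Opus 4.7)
I would first unpack the hypotheses. With $\dim M = 8k$ and sum of Betti numbers three, Poincar\'e duality forces $H^*(M;\mathbb{Q}) = \mathbb{Q}\{1,\alpha,\beta\}$ with $\alpha\in H^{4k}$, $\beta\in H^{8k}$, and $\alpha^2 = c\beta$ for some nonzero integer $c$. The Euler characteristic is $\chi(M)=3$. Since the rational cohomology vanishes outside degrees $0, 4k, 8k$, every Chern class $c_j(TM)$ with $j\notin\{0,2k,4k\}$ is torsion, so modulo torsion $c_{2k}(TM) = a\alpha$ for some $a\in\mathbb{Z}$ and $c_{4k}(TM) = 3\beta$; in particular $c_1(TM)$ is torsion.

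The plan is to apply the Atiyah--Singer index theorem to the canonical spin${}^c$ Dirac operator of $M$ twisted by $E = TM$, giving $\int_M \ch(TM)\,\mathrm{Td}(TM) \in \mathbb{Z}$. Modulo torsion, Newton's identities yield $\ch_{2k}(TM) = -a\alpha/(2k-1)!$ and $\ch_{4k}(TM) = (a^2c-6)\beta/(2(4k-1)!)$. The pure-$c_{2k}$ coefficient of the Todd polynomial $T_{2k}$ equals $B_{2k}/(2k)!$, read off from $x/(1-e^{-x}) = 1 + x/2 + \sum_{m\ge 1}(B_{2m}/(2m)!)x^{2m}$, so $T_{2k}(TM) = (B_{2k}\,a/(2k)!)\alpha$ modulo torsion. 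Subtracting the integer $4k\int_M \mathrm{Td}(TM)$ (the untwisted index) and clearing denominators by $2(4k-1)!$, the integrality statement reduces to
\[
a^2c\left(1 - 2B_{2k}\binom{4k-1}{2k-1}\right) - 6 \;\in\; 2(4k-1)!\cdot\mathbb{Z}.
\]

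The hard part is ruling out this congruence when $k$ is not a power of $2$; this is where the Lemma is used. Set $D := 1 - 2B_{2k}\binom{4k-1}{2k-1}$. By von Staudt--Clausen $v_2(B_{2k}) = -1$, while Kummer's theorem expresses $v_2\binom{4k-1}{2k-1}$ as the number of carries in the base-$2$ sum $2k + (2k-1)$. Examining this addition shows the count is zero exactly when $2k$ is a power of $2$ (the equality case of the Lemma applied to $n=2k$) and at least one otherwise. Hence under our hypothesis $v_2(D)=0$, while $v_2(2(4k-1)!) \geq 2 > v_2(6) = 1$, so the congruence forces $v_2(a^2c) = 1$, i.e., $a$ odd and $c \equiv 2\pmod 4$. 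To close this residual case I would combine with the Hirzebruch signature theorem: modulo torsion $\sigma(M) = \pm 1$ is a rational linear combination of $\int_M p_k(TM)^2 = 4a^2c$ and $\int_M p_{2k}(TM) = a^2c + 6$, giving a second Diophantine equation in $(a,c)$ whose $2$-adic content together with $v_2(a^2c)=1$ admits no integer solution.
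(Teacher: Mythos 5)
Your main computation is correct and takes a genuinely different route from the paper. The paper works entirely with the Hirzebruch signature theorem: it uses Hirzebruch's congruence $\sigma \equiv \chi \pmod 4$ for almost complex $8k$-manifolds to pin down $\sigma = -1$, writes the signature as $\int_M (h_{2k}p_{2k} + h_{k,k}p_k^2)$, converts to Chern classes, and derives a contradiction by counting factors of $2$ after clearing the Bernoulli denominators, using Lemma 3.1 in the forms $v_2((2k)!) \leq 2k-2$ and $v_2((4k)!) = 2k + v_2((2k)!)$. You instead use integrality of the twisted spin${}^c$ Dirac index $\int_M \ch(TM)\,\Td(TM) \in \ZZ$ (the operator the paper deploys only in Section 2, for rational homology spheres), and the hypothesis on $k$ enters through $v_2\binom{4k-1}{2k-1} \geq 1$ via Kummer's theorem; since $v_2\binom{4k-1}{2k-1} = 2k - 1 - v_2((2k)!)$, this is precisely the equality case of Lemma 3.1 in disguise. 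I checked your Newton-identity computations of $\ch_{2k}$ and $\ch_{4k}$, the linear coefficient $B_{2k}/(2k)!$ of $T_{2k}$, and the resulting reduction to $v_2(a^2cD) = v_2(6) = 1$ with $D$ a $2$-adic unit; all of this is valid, and it has the virtue of avoiding Hirzebruch's mod-$4$ congruence entirely.

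The endgame, however, has a genuine gap as written. You dispose of the residual case $v_2(a^2c) = 1$ by asserting that the signature equation in $(a,c)$ "admits no integer solution" $2$-adically --- but establishing that is exactly the kind of denominator-clearing and $2$-adic bookkeeping (explicit Bernoulli formula for $h_m$, the identity $h_{k,k} = \tfrac12 h_k^2 - \tfrac12 h_{2k}$, and Lemma 3.1 again) that constitutes the paper's entire proof, and you have not carried it out. The assertion happens to be true, but more to the point it is unnecessary: Poincar\'e duality makes the cup-product pairing on $H^{4k}(M;\ZZ)/\mathrm{tors} \cong \ZZ$ unimodular, so $c = \int_M \alpha^2 = \pm 1$. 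Hence $v_2(a^2c) = 2v_2(a)$ is even and can never equal $1$, which closes the residual case immediately and makes your argument complete without any appeal to the signature theorem.
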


\begin{proof} Suppose $M$ is such a manifold. Due to the rational cohomology of $M$ being concentrated in degrees $0,4k,8k$, the only non-trivial Pontryagin classes modulo torsion are $p_0, p_k, p_{2k}$. By Hirzebruch [6, p.777], on an $8k$--dimensional closed almost complex manifold, the signature is equal to the Euler characteristic modulo 4. Since $H^{4k}(M;\QQ)$ is one-dimensional, the signature must be $-1$, and so by the Hirzebruch signature theorem we have $$\int_M h_{2k} p_{2k} + h_{k,k} p_k^2 = - 1.$$ In [5, p.12] we find the following explicit formula for the leading coefficients $h_m$ of the $m$th Hirzebruch polynomial $L_m$, $$h_m = \frac{2^{2m} (2^{2m-1}-1)}{(2m)!}B_m.$$ Here $B_m$ denotes the $m$th non-trivial Bernoulli number without sign. That is, $$B_1 = \tfrac{1}{6}, B_2 = \tfrac{1}{30}, B_3 = \tfrac{1}{42}, \ldots $$ Let us denote by $N_k$ and $D_k$ the numerator and denominator of $B_k$ in maximally reduced form. It is well known by the von Staudt--Clausen theorem that $D_k$ is the product of (distinct) primes $p$ such that $p-1$ divides $2k$. In particular, $D_k$ contains a single factor of $2$. On the other hand, $N_k$ is a product of odd primes. 

As for the coefficient $h_{k,k}$ that appears in the above integral, by [3, Theorem 5] we have the equation $$h_{k,k} = \tfrac{1}{2} h_k^2 - \tfrac{1}{2} h_{2k},$$ where $h_k$ denotes the coefficient of $p_k$ in the $k^{\textrm{th}}$ Hirzebruch polynomial whose formula is given above.

Since $M$ is assumed to admit an almost complex structure, modulo torsion we have the following relations between the Pontryagin and Chern classes, \begin{align*} p_k &= 2 (-1)^k c_{2k}, \\ p_{2k} &= c_{2k}^2 + 2c_{4k}.\end{align*} Again since $H^{4k}(M; \QQ)$ is one-dimensional, there is an integer $\eta$ such that $c_{2k} = \eta a$ (plus torsion) for a fixed generator $a$ of the free part of $H^{4k}(M; \ZZ)$.

Evaluating the signature formula integral, we obtain $$\eta^2 (h_{2k} + 4h_{k,k}) = 6h_{2k} + 1.$$ Using the above expression for $h_{k,k}$, we have $\eta^2(2h_k^2 - h_{2k}) = 6h_{2k} + 1$. Now, by the above formula for $h_k, h_{2k}$, after clearing denominators from here we obtain \begin{align*} \eta^2  2^{4k+1}  (2^{2k-1}-1)^2  (4k)!  N_k^2  D_{2k} - \eta^2  2^{4k}  (2^{4k-1}-1)  (2k)!^2  N_{2k}  D_k^2 \\  = 3  \cdot 2^{4k+1}  (2^{4k-1}-1)  (2k)!^2  N_{2k}  D_k^2 + (2k)!^2  (4k)!  D_k^2  D_{2k}.\end{align*}

We count the number of factors of 2 in the four terms of this equation. Recall that $D_k$ and $D_{2k}$ each contain a single factor of 2. Let us denote the number of factors of $2$ in $(2k)!$ by $l$, and observe that the number of factors of $2$ in $(4k)!$ is given by $ \lfloor \tfrac{4k}{2} \rfloor + \lfloor \tfrac{4k}{4} \rfloor + \cdots = 2k + \lfloor \tfrac{2k}{2} \rfloor + \lfloor \tfrac{2k}{4} \rfloor + \cdots = 2k+l.$ Since $k$ is not a power of two, using Lemma 3.1. we conclude $2k+l \leq 4k-2$. We can also bound the number of factors of 2 in $(4k)!$ from below by $2l+2$. Indeed, since $l\leq 2k-2$ by Lemma 3.1, we have $2k+l \geq 2l+2$. Now we see that the first term on the left hand side in the equation above contains at least $4k+2l+4$ factors of 2. The second term on the left hand side has at least $4k+2l+2$.  The first term on the right hand side contains exactly $4k+2l+3$ factors of 2, while the second one contains at most $4k+2l+1$. This contradiction in divisibility tells us that no such $M^{8k}$ can exist.
\end{proof}

In [13, Lemma 2.3] it was shown that a closed smooth manifold with sum of Betti numbers three can only occur in dimension 4 or in dimensions of the form $8k$. (In this paper, the goal was to find simply connected such manifolds, but the argument does not require any assumptions on the fundamental group.) This lets us exclude the case of dimension $8k+4$ in our consideration of almost complex manifolds with sum of Betti numbers equal to three, as no such smooth manifolds exist to begin with. To summarize, we have the following result. 

\begin{theorem}
Let $M$ be a closed almost complex manifold with sum of Betti numbers three. Then $\dim M$ is a power of two.
\end{theorem}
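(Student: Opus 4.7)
The plan is to assemble the pieces already developed in Section 3; the main arithmetic content of the theorem is Proposition 3.2, and what remains is purely a reduction argument.

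First I would record, as in the opening paragraph of the section, that a closed manifold $M$ with $b_0(M)+b_1(M)+\cdots = 3$ must have even dimension $n$ by Poincar\'e duality, and its rational cohomology is forced into degrees $0$, $n/2$, $n$ with each group one-dimensional. If $n$ had the form $4k+2$, the intersection pairing on $H^{2k+1}(M;\QQ) \cong \QQ$ would be a non-degenerate skew-symmetric form on a one-dimensional space, which is impossible. Hence $n = 4k$.

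Next I would invoke [13, Lemma 2.3], which restricts closed smooth manifolds with sum of Betti numbers three to dimensions $4$ or $8k$. This excludes the case $n = 8k+4$ with $k \geq 1$ on the smooth level, without using the almost complex hypothesis. The only dimensions still to consider are therefore $n = 4$ and $n = 8k$ with $k \geq 1$.

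Finally I would apply Proposition 3.2 to the $8k$ case: such an almost complex $M$ can exist only if $k$ is a power of two, say $k = 2^j$ with $j \geq 0$, giving $n = 2^{j+3}$. Together with the lone remaining low-dimensional possibility $n = 4 = 2^2$, every permissible dimension is a power of two, which is what we want.

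The serious obstacle in this theorem — the delicate $2$-adic valuation argument combining the Hirzebruch signature formula with the von Staudt--Clausen control on denominators of Bernoulli numbers — was already overcome in Proposition 3.2. The theorem itself is then a short synthesis, and I anticipate no additional technical difficulty beyond citing the two preceding results cleanly.
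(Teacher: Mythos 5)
Your proposal is correct and follows essentially the same route as the paper: the initial Poincar\'e duality reduction to dimension $4k$, the appeal to [13, Lemma 2.3] to exclude dimensions $8k+4$, and Proposition 3.2 to exclude dimensions $8k$ with $k$ not a power of two. Nothing is missing.
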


\section{Remarks on dimensions equal to a power of 2}


By Adams' solution of the Hopf invariant one problem, any $2n$-dimensional manifold admitting a minimal cellular decomposition with three cells (that is, one 0-cell, one $n$-cell, and one $2n$-cell) has the homotopy type of $\mathbb{R}\mathbb{P}^2$, $\CP^2$, $\mathbb{H}\mathbb{P}^2$, or $\mathbb{O}\mathbb{P}^2$. Considering the relaxed constraint of having rational cohomology ring $\QQ[\alpha]/(\alpha^3)$, by [4, Theorem A] we know that any such manifold must have dimension of the form $8(2^a + 2^b)$, though the only known examples are in dimensions that are a power of two. Known examples exist in dimensions beyond 16. Indeed, in [7, Theorem A], examples are given of (simply connected) closed manifolds in dimensions 32, 128, and 256 with rational cohomology ring $\QQ[\alpha]/(\alpha^3)$. 

Suppose $M^{8k}$ is a closed almost complex manifold with sum of Betti numbers three. By Theorem 3.3, we know that $k$ must be a power of two. Consider the Chern classes $c_{2k}$ and $c_{4k}$. We know that $\int_M c_{4k} = 3$ and that the free part of $c_{2k}$ as an integral class is of the form $\eta a$ for some fixed generator $a$ of the free part of $H^{4k}(M; \ZZ)$. We observe some conditions on the prime factors of $\eta$. 

\begin{proposition} The coefficient $\eta$, where $c_{2k} = \eta a$ (plus torsion), is odd. Furthermore, it is not divisible by nine.\end{proposition}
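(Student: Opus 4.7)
The plan is to derive both claims from the signature equation
$$\eta^2(2 h_k^2 - h_{2k}) = 6 h_{2k} + 1$$
established in the proof of Proposition 3.2, by performing $p$-adic valuation analyses at $p = 2$ and $p = 3$. The key inputs are the von Staudt--Clausen theorem, which gives $v_p(B_m) = -1$ for $p \in \{2,3\}$ and every $m \geq 1$, together with Legendre's formula $v_p(n!) = (n - s_p(n))/(p-1)$, where $s_p(n)$ is the base-$p$ digit sum of $n$.

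For the parity claim, since $k$ is a power of $2$ both $2k$ and $4k$ are powers of $2$, so $s_2(2k) = s_2(4k) = 1$ and thus $v_2((2k)!) = 2k - 1$, $v_2((4k)!) = 4k - 1$. Substituting into $h_m = \frac{2^{2m}(2^{2m-1}-1)}{(2m)!} B_m$, and using that $2^{2m-1}-1$ is odd, one finds $v_2(h_k) = v_2(h_{2k}) = 0$. Then $v_2(2 h_k^2) = 1$, whence $v_2(2 h_k^2 - h_{2k}) = 0$ and $v_2(6 h_{2k}+1) = 0$, so the signature equation forces $v_2(\eta) = 0$; that is, $\eta$ is odd.

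For $9 \nmid \eta$, set $a = v_3((2k)!)$ and $b = v_3((4k)!)$. Since $2k-1$ and $4k-1$ are odd, $2^{2k-1}-1 \equiv 2^{4k-1}-1 \equiv 1 \pmod 3$, so $v_3(h_k) = -a-1$ and $v_3(h_{2k}) = -b-1$. Integrality of $\binom{4k}{2k}$ gives $b - 2a \geq 0$, and $b \geq 1$ yields $v_3(6 h_{2k} + 1) = -b$. I would then split into three cases on $b - 2a$. If $b = 2a$, the left-hand valuations $-2a - 2$ and $-2a - 1$ are distinct, so $v_3(2 h_k^2 - h_{2k}) = -2a - 2$ and the equation forces $v_3(\eta) = 1$. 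If $b = 2a+1$, the two valuations coincide at $-2a-2 = -b-1$, so $v_3(2 h_k^2 - h_{2k}) \geq -b - 1$ and hence $2v_3(\eta) \leq 1$; the evenness and non-negativity of $v_3(\eta^2)$ then force $v_3(\eta) = 0$. If $b \geq 2a + 2$, then $v_3(h_{2k}) < v_3(2 h_k^2)$, so $v_3(2 h_k^2 - h_{2k}) = -b - 1$, and the equation would require $v_3(\eta^2) = 1$, which is impossible---so no such $M$ exists in these dimensions. In every case consistent with $M$ existing we therefore have $v_3(\eta) \leq 1$, so $9 \nmid \eta$. The main subtlety I anticipate is the middle case $b = 2a + 1$, where the two leading $3$-adic terms on the left of the signature equation could in principle cancel beyond first order; fortunately, the evenness constraint on $v_3(\eta^2)$ already pins down $v_3(\eta) = 0$ without needing to track that cancellation precisely.
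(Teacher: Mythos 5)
Your proof is correct, and it runs on the same engine as the paper's: a $2$-adic and a $3$-adic analysis of the identity $\eta^2(2h_k^2 - h_{2k}) = 6h_{2k}+1$, with von Staudt--Clausen controlling the Bernoulli denominators. The execution differs in two respects worth recording. For parity, the paper never computes $v_2(h_k)$: it invokes the general fact that every coefficient of an $L$-polynomial has odd denominator (Hirzebruch, Lemma 1.5.2), clears denominators to get $\eta^2(\alpha+4\beta)=6\gamma+\delta$ with $\delta$ odd, and reduces mod $2$ --- an argument valid for arbitrary $k$. Your computation $v_2(h_m)=s_2(2m)-1=0$ instead exploits that $k$ is a power of two (legitimate here, since Theorem 3.3 has already placed you in that case), so it is more explicit but less general. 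For the statement about $9$, the paper clears all denominators into a four-term integer identity and counts factors of $3$ in each term, obtaining $v_3(\eta)\le 1$ in one uniform stroke; your direct valuation bookkeeping with the case split on $b-2a$ reaches the same bound and extracts finer information as a byproduct (that $b\ge 2a+2$ is incompatible with the existence of such a manifold, and that $v_3(\eta)=0$ unless $b=2a$). Both treatments handle the one delicate point --- possible cancellation between $2h_k^2$ and $h_{2k}$ when their $3$-adic valuations coincide --- correctly: you via the evenness of $v_3(\eta^2)$, the paper by arranging the count so that the term of exactly known valuation dominates its side of the equation.
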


\begin{proof}

Recall the equation $$\eta^2 (h_{2k} + 4h_{k,k}) = 6h_{2k} + 1$$ obtained in the proof of Proposition 3.2 (which is still valid in the case of $8k$ equal to a power of two). It is known that the common denominator of all the terms in any Hirzebruch $L$-polynomial is odd [5, Lemma 1.5.2], so after clearing denominators, this equation becomes $$\eta^2(\alpha + 4\beta) = 6\gamma + \delta,$$ where $\alpha,\beta,\gamma,\delta$ are integers and $\delta$ is odd. Looking at this equation modulo 2, it follows that $\eta$ must be odd.

Now we show that $\eta$ cannot be divisible by 9. Consider the equation \begin{align*} \eta^2  2^{4k+1}  (2^{2k-1}-1)^2  (4k)!  N_k^2  D_{2k} - \eta^2  2^{4k}  (2^{4k-1}-1)  (2k)!^2  N_{2k}  D_k^2 \\  = 3  \cdot 2^{4k+1}  (2^{4k-1}-1)  (2k)!^2  N_{2k}  D_k^2 + (2k)!^2  (4k)!  D_k^2  D_{2k}\end{align*} obtained as in the proof of Proposition 3.2. We count the factors of 3 in each summand. Denote the number of factors of 3 in $\eta$ and $(2k)!$ by $m$ and $l$ respectively. We note that since $\lfloor 2\cdot \rfloor \geq 2\lfloor \cdot \rfloor$, the number of factors of 3 in $(4k)!$ is at least $2l$. By the von Staudt--Clausen theorem, $D_k$ and $D_{2k}$ each contain exactly one factor of 3; the numerators $N_k$ and $N_{2k}$ are not divisible by 3. Also note that $2^{2k-1}-1$ and $2^{4k-1}-1$ are congruent to 1 mod 3. Now we see that the four terms in the equation above contain at least $2m+2l+1$, exactly $2m+2l+2$, exactly $2l+3$, and at least $4l+3$ factors of 3 respectively. We conclude $m \leq 1$.
\end{proof}

\begin{remark} Similarly one shows that $\eta$ is not divisible by 5 if $8k\geq 16$, by 17 if $8k\geq 64$, by 257 if $8k\geq 1024$, or by 65537 if $8k \geq 2^{18}$. On the other hand, as we increase the dimension $8k$, the coefficient $\eta^2$ in any such almost complex manifold must tend to infinity. Namely, by the well-known relation $$B_k = \frac{(2k)!}{2^{2k-1} \cdot \pi^{2k}}\zeta(2k) $$ between the Bernoulli numbers and the Riemann zeta function, we have that $B_k$ tends to $\tfrac{(2k)!}{2^{2k-1} \cdot \pi^{2k}}$ as $k\to \infty$. From here it follows that the coefficient $h_k$ tends to zero as $k\to \infty$. Note that from $\eta^2(2h_k^2 - h_{2k}) = 6h_{2k} + 1$ and the fact that $h_k$ and $h_{2k}$ are positive, we obtain $\eta^2 \geq \tfrac{1}{2 h_k^2}$. 

\vspace{0.5em}

By direct calculation with the equation $\eta^2 (2h_k^2 - h_{2k}) = 6h_{2k}+1$ used above, we see that there are no integer solutions for $\eta$ if $8k \in \{8,16,32,64,128,256,512,1024\}$. The only possible solutions of the equations satisfy $\eta^2 \in \QQ \setminus \ZZ$, except for the case of $k=1$ where the equation is $\eta^2 = 29$. So, there are no almost complex manifolds in dimensions between $8$ and $1024$ whose sum of Betti numbers is three. \end{remark}


\end{document}